\theoremstyle{plain}
 \newtheorem{thm}{Theorem}[section]
\theoremstyle{definition}
\theoremstyle{remark}
 \numberwithin{equation}{section}
\renewcommand{\le}{\leqslant}
\renewcommand{\ge}{\geqslant}
\title[On the number of the equivalence classes of invertible Boolean functions]{On the number of equivalence classes of invertible Boolean functions under action of permutation of variables on domain and range}
\subjclass[2010]{Primary 05A15; Secondary 06E30}
\keywords{invertible Boolean functions , the number of  equivalence
classes, permutation group}
\author[Cari\'c]{\bfseries Marko Cari\'c}
\address{
Advanced School of Electrical Engineering Applied Studies\\ 
Belgrade, Serbia \\
} \email{caric.marko@gmail.com}
\author[\v{Z}ivkovi\'c]{Miodrag \v{Z}ivkovi\'c}
\address{
Faculty of Mathematics \\
Department of Informatics \\
University of Belgrade, Serbia \\
 }
\email{ezivkovm@matf.bg.ac.rs}
\begin{document}

\vspace{18mm} \setcounter{page}{1} \thispagestyle{empty}

\begin{abstract}
Let $V_n$ be the number of  equivalence classes of invertible maps
from $\{0,1\}^n$ to $\{0,1\}^n$, under action of permutation of
variables on domain and range. So far, the values $V_n$ have been
known for $n\le 6$. This paper describes the procedure by which the
values of $V_n$ are calculated for $n\le 30$.
\end{abstract}

\maketitle

\section{Introduction}

Let $V_n$ be the number of  equivalence classes of invertible maps
from $\{0,1\}^n$ to $\{0,1\}^n$, under action of permutation of
variables on domain and range. Lorens~\cite{1} gave a method for
calculating the number of equivalence classes of invertible Boolean
functions under the following group operations on the input and
output variables: complementation, permutation, composition of
complementation and permutation, linear transformations and affine
transformations. In particular, he calculated the values $V_n$ for
$n\le 5$. Irvine~\cite{4} in 2011. calculated $V_6$ (the sequence
A000653). In this paper using a more efficient procedure, the values
$V_n$ are calculated for $n\le 30$.

\section{Notation}

Let $S_r$ denote symmetric group on $r$ letters. Consider a set of
vectorial invertible Boolean functions (hereinafter referred to as
functions), i.e. the set $S_N$ of permutations of $B_n=\{0,1\}^n$,
where $N=2^n$. The function $F\in S_N$ maps the $n$-tuple
$X=(x_1,...,x_n)\in B_n$ into $Y=(y_1,...,y_n)=F(X)$. For some
permutation $\sigma \in S_n$, the result of its action on
$X=(x_1,...,x_n)\in B_n$ is
${\sigma}'(X)=(x_{\sigma(1)},...,x_{\sigma(n)})\in B_n$.

An arbitrary pair $(\rho,\sigma)\in S_n^2$ determines mapping
$T_{\rho,\sigma}:\,S_N\rightarrow S_N$, defined by
$T_{\rho,\sigma}(F)={\rho}'\circ F \circ {\sigma}'$ where $F\in
S_N$; in other words, if $F'=T_{\rho, \sigma}(F) $ then $F'(X)=
\rho'(F(\sigma'(X)))$ for all $X\in B_n$. The set of all mappings
$T_{\rho, \sigma}$ with respect to composition is a subgroup of
$S_{N!}$.

The two functions $F,H\in S_N$ are considered equivalent if there
exist permutations $\rho, \sigma \in S_n$ such that
 $H=T_{\rho,\sigma}(F)$, i.e.
if they differ only by a permutation of input or output variables.

Let $\iota$ denote the identity permutation. Every permutation
$\sigma \in S_n$ uniquely determines the permutation ${\sigma}'\in
S_N$. Let $S_n'$ denote the subgroup of $S_N$ consisting of all
permutations ${\sigma}'$ corresponding to permutations $\sigma\in
S_n$. The mapping  $\sigma \mapsto \sigma'$ is a monomorphism from
$S_n$ to $S_N$ (see \cite{2}).

Let  $\sigma\in  S_r$. Let $p_i$, $1\le i\le r$, denote the number
of cycles of length $i$ in a cycle decomposition of $\sigma$; here
$\sum_{i=1}^r ip_i=r$. The cycle index monomial of $\sigma$ is the
product $\prod_{i=1}^r t_i^{p_i}$, where $t_i$, $1\le i\le r$, are
independent variables. It can be equivalently described by the
vector $spec(\sigma)=p=(p_1,p_2,\ldots,p_r)$. For an arbitrary
positive integer $n$ let $P_n=\{(p_1,p_2,\ldots,p_n) \mid p_i\ge
0,\sum_{i=1}^n i p_i =n\}$ denote the set of partitions of $n$. For
some $p\in P_n$ let $S_{n,p} = \{ \sigma\in S_n \mid
spec(\sigma)=p\}$. An arbitrary partition $p$ corresponds to the
decomposition $n=k_{p,1}+k_{p,2}+\cdots+k_{p,m(p)}$ into positive
summands $k_{p,1}\ge k_{p,2}\ge\cdots k_{p,m(p)}>0$ where summand
$i=n,n-1,\dots,1$ in this sum appears $p_i$ times.

Let $<r,s>$ and $(r,s)$ denote the least common multiple and the
greatest common divisor of $r$ and $s$, respectively.

\section{Preliminaries}

The calculation of $V_n$ is based on the following known facts (see
e.g. \cite{1,2,3}):
\begin{enumerate}
\item The cardinality of  $S_{n,p}$ equals to
$$
|S_{n,p}|=\frac{n!}{\prod_{i} i^{p_i} p_i!}.
$$
\item
Let $\sigma_1$, $\sigma_2\in S_n$ be permutations such that
 $spec(\sigma_1)=spec(\sigma_2)$.
 Then
 $spec(\sigma_1')=spec(\sigma_2')$.
In other words, permutations with the same cycle index  in $S_n$
induce the permutations with the same cycle index in $S_n'$.
\item
The permutation $T_{\rho,\sigma}$ has at least one fixed point if
and only if  $spec(\sigma)=spec(\rho)$.
\item
Let $\sigma \in S_{n,p}$ and let
$spec(\sigma')=p'=(p'_1,p'_2,\ldots,p'_N)$. The number of fixed
points of $T_{\rho, \sigma}$ is
$$N_p=
\prod_{i} i^{p'_i} p'_i!.
$$
\item
If $\sigma\in S_n$ is a cyclic permutation
(a permutation having only one cycle of the length $k>1$), then the cycle index 
monomial of the permutation $\sigma'$ is
$$
\prod_{d|k}f_d^{e(d)},
$$
where the numbers $e(k),k\ge 1$ are defined by the recurrent
relation
$$
e(k)=\frac{1}{k} \left(2^k-\sum_{d|k, d<k}d\cdot e(d)\right), \quad
k>1.
$$
with the initial value $e(1)=2$.
\item
If $\alpha$ is a permutation on a set $X$ with $|X|=a$ and $\alpha$
has a cycle index monomial $f_1^{j_1}\cdots f_a^{j_a}$, and $\beta$
is a permutation on $Y$ with $|Y|=b$ and $\beta$ has a cycle index monomial
$f_1^{k_1}\cdots f_b^{k_b}$, then the permutation $(\alpha,\beta)$
acting on $X\times Y$ by the rule
$$
(\alpha,\beta)(x,y)=(\alpha (x),\beta (y))
$$
has cycle index monomial given by
\begin{equation}\nonumber
\Big(\displaystyle\prod_{p=1}^a f_p^{j_p}\Big)\varprod
\Big(\displaystyle\prod_{q=1}^b
f_q^{k_q}\Big)=\displaystyle\prod_{p=1}^a \displaystyle\prod_{q=1}^b
\big(f_p^{j_p}\times f_q^{ k_q}\big)=\displaystyle\prod_{p=1}^a
\displaystyle\prod_{q=1}^b  f_{<p,q>}^{j_pk_q(p,q)}.
\end{equation}
\end{enumerate}

\section{The number of  equivalence classes}

The value of $V_n$ is determined by the following theorem.

\begin{thm}
For an arbitrary $p\in P_n$ let $\sigma\in S_{n,p}$. If
$spec(\sigma')=(p'_1,\dots,p'_n)$, then
\begin{equation}\label{glavna}
V_n=\sum_{p\in P_n}\frac{\prod_{i} i^{p'_i} p'_i!}{\Big(\prod_{i}
i^{p_i} p_i!\Big)^2}.
\end{equation}
\end{thm}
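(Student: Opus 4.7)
The plan is to apply the Cauchy--Frobenius (Burnside) lemma to the action of the group $G=\{T_{\rho,\sigma}:(\rho,\sigma)\in S_n^2\}$ on the set $S_N$. First I would note that the map $(\rho,\sigma)\mapsto T_{\rho,\sigma}$ is a group homomorphism $S_n\times S_n\to S_{N!}$, so the sum over $g\in G$ in Burnside's formula can be replaced by a sum over all pairs $(\rho,\sigma)$, giving
\begin{equation*}
V_n \;=\; \frac{1}{(n!)^2}\sum_{(\rho,\sigma)\in S_n^2} \bigl|\mathrm{Fix}(T_{\rho,\sigma})\bigr|.
\end{equation*}
(A potential subtlety is that the homomorphism could have nontrivial kernel, but Burnside's formula holds for any group action and the right-hand side is unchanged whether we sum over pairs or over their images, so this costs us nothing.)

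Next I would use fact 3 to discard the pairs with $\mathrm{spec}(\rho)\neq\mathrm{spec}(\sigma)$: they contribute zero. Thus the sum splits over $p\in P_n$ into contributions from pairs $(\rho,\sigma)\in S_{n,p}\times S_{n,p}$. For each such pair, fact 4 gives $|\mathrm{Fix}(T_{\rho,\sigma})| = N_p = \prod_i i^{p'_i}p'_i!$, where $p'=\mathrm{spec}(\sigma')$. The key point here is that $N_p$ depends only on $p$ and not on the individual choice of $\sigma$ (or $\rho$): this is guaranteed by fact 2, which tells us that $\mathrm{spec}(\sigma')$ is determined by $\mathrm{spec}(\sigma)$.

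Combining, the contribution of partition $p$ to the sum equals $|S_{n,p}|^2\cdot N_p$, and substituting the formula $|S_{n,p}|=n!/\prod_i i^{p_i}p_i!$ from fact 1 and dividing by $(n!)^2$ yields exactly
\begin{equation*}
V_n \;=\; \sum_{p\in P_n}\frac{\prod_i i^{p'_i}p'_i!}{\bigl(\prod_i i^{p_i}p_i!\bigr)^2},
\end{equation*}
as claimed.

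The proof is essentially a bookkeeping exercise once the preliminary facts are in place; no step appears to be a genuine obstacle. The only place that requires a moment of care is invoking fact 2 to pull $N_p$ outside the inner double sum over $S_{n,p}\times S_{n,p}$, since otherwise the constancy of the number of fixed points across the conjugacy class would not be automatic. The remaining steps are a direct application of Burnside together with substitution of the explicit formulas from the Preliminaries section.
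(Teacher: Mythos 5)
Your proposal is correct and follows essentially the same route as the paper: both apply the Cauchy--Frobenius lemma summing over all pairs $(\rho,\sigma)\in S_n^2$, use facts (2)--(4) to reduce the fixed-point count to $N_p$ on the diagonal $\mathrm{spec}(\rho)=\mathrm{spec}(\sigma)$, and substitute $|S_{n,p}|$ from fact (1). Your extra remark about the possible kernel of $(\rho,\sigma)\mapsto T_{\rho,\sigma}$ is a sensible clarification, but it does not change the argument.
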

\begin{proof}
The permutation $F\in S_N$ is a fixed point of $T_{\rho,\sigma}$
 if $T_{\rho,\sigma}(F(X))=F(X)$
 holds for all $X\in B_n$.
 Let  $I(\rho,\sigma)$ be a number of fixed points of $T_{\rho,\sigma}$.
By Frobenius lemma (see e.g. \cite{1}) the number of equivalence
classes is equal to
\begin{equation}\nonumber
V_n=\frac{1}{(n!)^2}\sum_{\sigma \in S_n}\sum_{\rho \in
S_n}I(\rho,\sigma) =\frac{1}{(n!)^2}\sum_{p\in P_n}\sum_{\rho\in
S_{n,p}}\sum_{q\in P_n}\sum_{\sigma\in S_{n,q}}I(\rho,\sigma).
\end{equation}
By the facts (2),(3),(4) from Preliminaries, the number of fixed
points of $T_{\rho,\sigma}$ corresponding to fixed permutations
$\rho\in S_{n,p}$, $\sigma\in S_{n,q}$ is equal to
$$
I(\rho,\sigma)=\left\{
\begin{array}{ll}
0,\ p\neq q\\
N_p,\ p=q
\end{array}
\right.
$$
Therefore
\begin{eqnarray*}
V_n &=&
       \frac{1}{(n!)^2}\sum_{p\in P_n}\sum_{\rho\in S_{n,p}}\sum_{q\in \{p\}}\sum_{\sigma\in S_{n,p}}N_p
       = \frac{1}{(n!)^2}\sum_{p\in P_n}\sum_{\rho\in S_{n,p}}\sum_{\sigma\in S_{n,p}}N_p \\
      & =& \frac{1}{(n!)^2}\sum_{p\in P_n}N_p\sum_{\rho\in S_{n,p}}\sum_{\sigma\in S_{n,p}}1
     = \frac{1}{(n!)^2}\sum_{p\in P_n}N_p\cdot |S_{n,p}|^2\\
      & =& \sum_{p\in P_n}\frac{\prod_{i} i^{p'_i}
p'_i!}{\Big(\prod_{i} i^{p_i} p_i!\Big)^2}
\end{eqnarray*}
\end{proof}

By induction the following generalization of the fact (6) can be
proved. If $\alpha_i$ is permutation on $Z_i$, $|Z_i|=k_{i},
i=1,\dots,n$, and if the cycle index monomial of $\alpha_i$ is
$f_1^{y_{i,1}}\cdots f_{k_i}^{y_{i,k_i}}$, then the permutation 
$(\alpha_1,\dots,\alpha_n)$ acting on  $Z_1\times Z_2\times \cdots
\times Z_n$ by the rule:
$$
(\alpha_1,\dots,\alpha_n)(z_1,\dots,z_n)=(\alpha_1(z_1),\dots,\alpha_n(z_n))
$$
has cycle index monomial given by:
\begin{equation}\label{velika}
\begin{split}
\varprod_{i=1}^n \Big(\prod_{z_i=1}^{k_{i}}
f_{z_i}^{y_{i,z_i}}\Big) & =
\prod_{z_1=1}^{k_1}\prod_{z_2=1}^{k_2}\cdots\prod_{z_n=1}^{k_n}
\varprod_{i=1}^n f_{z_i}^{y_{i,z_i}}\\ & = 
\prod_{z_1=1}^{k_1}\prod_{z_2=1}^{k_2}
\cdots\prod_{z_n=1}^{k_n}f_{<z_1,z_2,\dots,z_n>}^{\prod_{i=1}^n (z_i
y_{i,z_i})/<z_1,z_2,\dots,z_n>}
\end{split}
\end{equation}
The proof is based on the fact, also proved by induction, that the cycle index monomial of the 
direct product of $n$ permutations with cycle index monomials
$f_{z_i}^{y_i},\ 1\le i\le n$ is equal to
$$
\varprod_{i=1}^n
f_{z_i}^{y_i}=f_{<z_1,z_2,\dots,z_n>}^{\prod_{i=1}^n
(z_iy_i)/<z_1,z_2,\dots,z_n>}
$$

Using this generalization the following theorem shows how to obtain
cycle index $p'$ of $\sigma'$,  used in previous theorem. 
\begin{thm}
Let $p\in P_n$ be an arbitrary partition and let $\sigma\in
S_{n,p}$. Let $\sigma=\alpha_1\alpha_2\ldots\alpha_{m}$ be a
decomposition of $\sigma$ into disjoint cycles. Let the length of
$\alpha_i$ be $k_i$, $1\le i\le m$. The cycle index monomial
$\prod_{i} f_i^{p_i'}$ of the corresponding $\sigma'$ is given by
$$
\varprod_{i=1}^m \Big(\prod_{z_i|k_{i}} f_{z_i}^{e(z_i)}\Big)= 
\prod_{z_1|k_{1}}\prod_{z_2|k_{2}}
\cdots\prod_{z_{m}|k_{m}}
f_{<z_1,z_2,\dots,z_{m}>}^{\prod_{i=1}^{m}z_ie(z_i)/<z_1,z_2,\dots,z_{m}>}
\equiv \prod_{i} f_i^{p_i'}.
$$
\end{thm}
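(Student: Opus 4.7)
The plan is to reduce the theorem to a direct application of fact (5) from the Preliminaries together with the generalization of fact (6) (equation \eqref{velika}) stated just before the theorem.

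First, I would use the disjointness of the cycles $\alpha_1,\ldots,\alpha_m$ to decompose the domain of $\sigma'$ as a product. Let $J_i\subseteq\{1,\ldots,n\}$ be the set of coordinate indices moved by $\alpha_i$; these sets are disjoint, $|J_i|=k_i$, and $\bigcup_i J_i = \{1,\ldots,n\}$. Hence $B_n=\{0,1\}^n$ is naturally identified with the Cartesian product $\prod_{i=1}^{m}\{0,1\}^{J_i}$. Under this identification, the definition $\sigma'(X)=(x_{\sigma(1)},\ldots,x_{\sigma(n)})$ permutes the coordinates inside each factor $\{0,1\}^{J_i}$ according to $\alpha_i$ and independently across factors. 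Writing $\alpha_i'$ for the permutation of $\{0,1\}^{J_i}$ induced by $\alpha_i$, this means that $\sigma'$ coincides with the direct product $(\alpha_1',\alpha_2',\ldots,\alpha_m')$ acting componentwise on $\prod_{i=1}^{m}\{0,1\}^{J_i}$.

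Next, for each $i$ I would apply fact (5): since $\alpha_i$ is a single cycle of length $k_i$, the cycle index monomial of $\alpha_i'$ is $\prod_{d\mid k_i} f_d^{e(d)}$. I would then plug these $m$ monomials into the generalization of fact (6), i.e.\ equation \eqref{velika}, with the role of $\alpha_i$ played by $\alpha_i'$ and the exponents set to $y_{i,z_i}=e(z_i)$ for $z_i\mid k_i$ (and $y_{i,z_i}=0$ otherwise, so only the divisors of $k_i$ contribute in the $i$-th inner product). Substituting directly into \eqref{velika} yields precisely the displayed expression for $\prod_i f_i^{p'_i}$.

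The main potential obstacle is the first step: checking carefully that, under the identification $B_n\cong\prod_i\{0,1\}^{J_i}$, the induced map $\sigma'$ really factors as $(\alpha_1',\ldots,\alpha_m')$. This amounts to noting that $\sigma(j)\in J_i$ whenever $j\in J_i$, so the $J_i$-coordinates are permuted only among themselves and the action on each block is exactly that of the single-cycle induced permutation $\alpha_i'$ whose monomial is handled by fact (5). Once this is in place, the two preceding facts assemble mechanically into the stated formula, so the rest of the argument is a routine substitution.
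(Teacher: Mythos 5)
Your proposal is correct and follows essentially the same route as the paper: fact (5) gives the monomial $\prod_{z\mid k_i} f_z^{e(z)}$ for each cycle, and the generalization \eqref{velika} of fact (6) combines them over the direct product. The only difference is that you spell out the identification $B_n\cong\prod_i\{0,1\}^{J_i}$ and the factorization $\sigma'=(\alpha_1',\ldots,\alpha_m')$, which the paper asserts implicitly --- a welcome extra bit of care (just note that length-$1$ cycles must be included among the $\alpha_i$ with their one-point supports, handled by $e(1)=2$).
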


\begin{proof}
The cycle of length $k_i$ in $\sigma$ induces the product of cycles in $\sigma'$ 
with the cycle index monomial $\prod_{z_i|k_i} f_{z_i}^{e(z_i)}$.
The product of permutations with cycle index monomial $\prod_{i=1}^{n}t_i^{p_i}=\prod_{i=1}^m t_{k_i}$ in 
$\sigma$ induces a permutation with the cycle index monomial 
$\varprod_{i=1}^m\prod_{z_i|k_{i}}f_{z_i}^{e(z_i)}$ in $\sigma'$.
The cycle index of  $\sigma'$ is then obtained using \eqref{velika}
$$
\prod_{i} f_i^{p_i'}= \prod_{z_1|k_{1}}\prod_{z_2|k_{2}}
\cdots\prod_{z_{m}|k_{m}} 
f_{<z_1,z_2,\dots,z_{m}>}^{\prod_{i=1}^{m}z_ie(z_i)/<z_1,z_2,\dots,z_{m}>}.
$$
\end{proof}

The values $V_n$, $n\le 9$, are listed in the Appendix 1. The
Mathematica code used to compute $V_n$, $n\le 30$,
is given in Appendix 2. 

The following diagram displays the dependence of the computation
time on $n$.
More precisely, the natural logarithms of the two times (in seconds), denoted
by $T_n$ and $T_n'$, respectively, are displayed --- the time needed
to compute $V_n$, and the time needed to compute only cycle indexes
 of $\sigma\in S_{n,p}$ and $\sigma'$ for all partitions
$p\in P_n$. It is seen that the most time-consuming part of the
algorithm is the calculation including large numbers.

\begin{figure}[htb]
\begin{center}
\includegraphics[width=99mm]{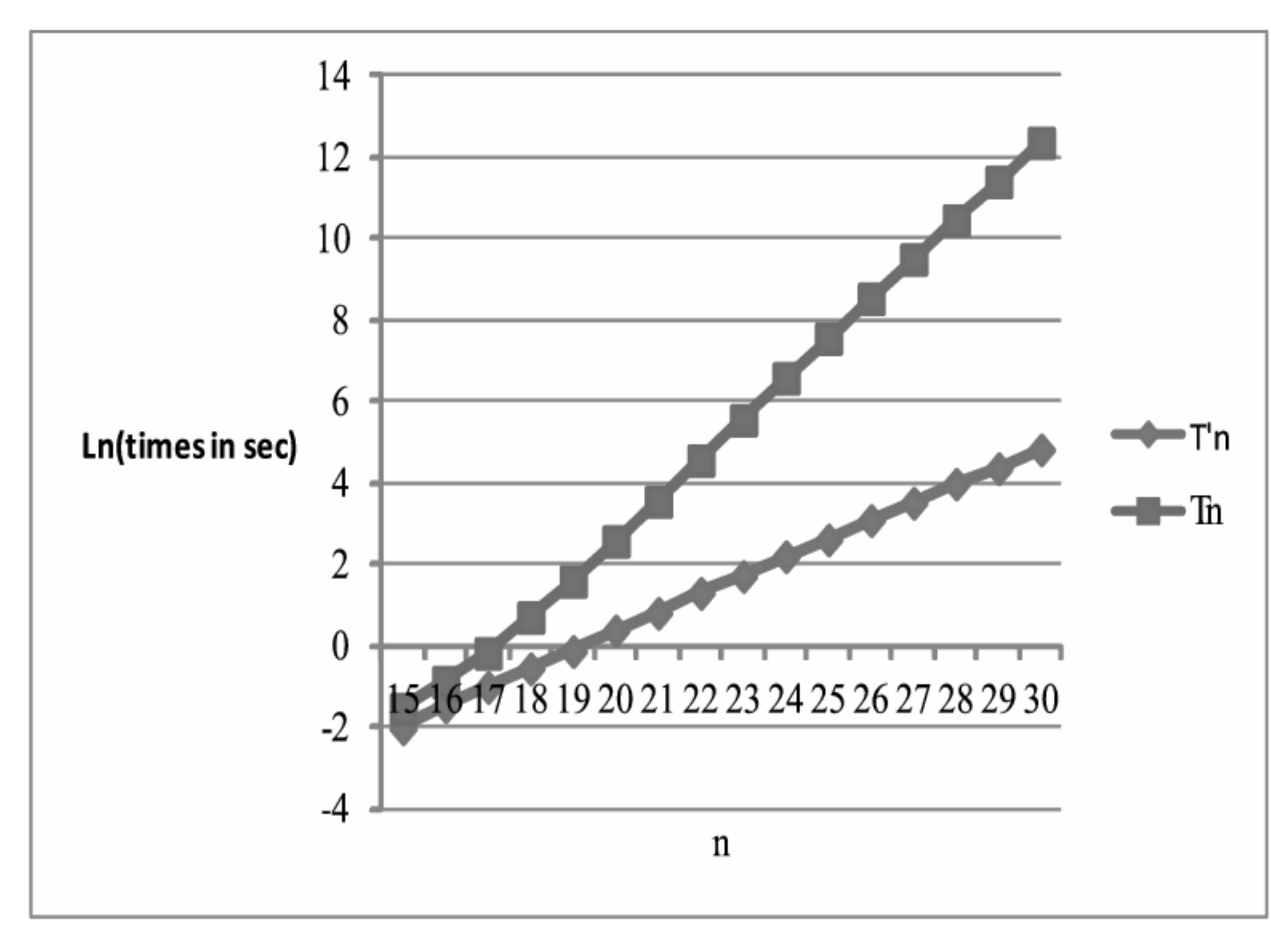}
\caption{Computation time.} \label{labela}
\end{center}
\end{figure}

\section{Acknowledgement}

We are greatly indebted to the anonymous referee for many useful
comments.

\bibliographystyle{amsplain}

\newpage
\appendix{Appendix 1}
\\
\begin{center}
  \begin{tabularx}{\textwidth}{| l | X |}
    \hline
    $n$ & $V_n$ \\ \hline
    1 & 2  \\ \hline
    2 & 7  \\ \hline
    3 & 1172  \\ \hline
    4 & 36325278240  \\ \hline
    5 & 18272974787063551687986348306336  \\ \hline
    6 &
244766458691906180755079840538506099505695351680436638205
95072184452539763881615360  \\ \hline
    7 &
151809524739614993439656189343767180263248421197965321191
904605724195878438824958585282876008649113038308707681433
094481099339034671518287392649956419149356667671350869456
39514180251383218789709596490606182400  \\ \hline
    8 &
527659782837770710136789040651740826093251458741455766971
652161168778155554883593317842362390835744875800075919391
263927615040018821216278774202023997397079366824661489931
236149172145693732111500392430562009227866877508698468121
554937012501451700073196915851213949214655962172717654540
696753659200149845589856184612118110722871947349006968239
825032973991355317680321485829624176524898666616495869646
130020407180560252107684411506255014485923878233106586773
392428630175571150605101668259332096000000  \\  \hline
   9 &
264067418605773442099193092831568856287068319670491812908
067855952079195791578203743795004998006943439319356656385
856600567710014711080138640649077160404172738505092500185
193919142842736546598032225971952796396301701808139278439
576519763761893112484051344864646495243044248730968463365
938697655293291537378793983074952990710361271944889122793
368550058901512007397664533157286038662914565247780734211
317088175814665483522871141960413322832647060844552347256
373397308619125110109047501219668866679980778072621664440
855318152480776920392080852178579766887332338861885660254
769972145595031440037588240231880327758293339576810734652
870268328095377818137864938964385302579227441411563577111
620506612132095088626377113807622856129578420710344181832
189204911264827583469568519167634378005415960387723763409
541076277771078470041148992652485171561684276863644504086
228172826519563213828486492418364247504800661684726454045
647372343309018751738150295146135226912754254972781877297
002209445794731239211578043637320353520211584485631583157
724614475661531057325470347588744693853515196047874741937
656673533964435465856256595927257366771432995467331495494
9427200000000000\\ \hline
  \end{tabularx}
\end{center}
\newpage
\appendix{Appendix 2 - Mathematica code

\begin{verbatim}
n = 4; (* the value of n is chosen here *)
e = Table[2, {n}];(*the sequence e*)
Do[
 DD = Divisors[k];
 e[[k]] = (2^k - Sum[DD[[j]] e[[DD[[j]]]], {j, 1, Length[DD] - 1}])/
   k, {k, 2, n}]
PP = IntegerPartitions[n]; npp =
 Length[PP];(*the list of partitions of n*)
(*the maximum length of a cycle in sigma'*)
mlcm = Apply[Max, Table[Apply[LCM, PP[[p]]], {p, npp}]];
(*decompositions of n corresponding to partitions*)
P = Table[0, {i, npp}, {j, n}];
Do[Do[P[[ipp, PP[[ipp, i]]]]++, {i, Length[PP[[ipp]]]}], {ipp, npp}]
EmptyList = Table[0, {j, mlcm}];(*used to initialize spec(sigma')*)
Vn = 0; Do[(*the main loop through all partitions of n*)
 PPP = PP[[p]]; np = Length[PPP];(*current partition*)
 Spec = EmptyList;(*initialization of spec(sigma')*)
 divsets = {};
 nd = 1;
 Do[(*k is the index of the current Partition element*)
  DD = Divisors[PPP[[k]]];
  AppendTo[divsets, DD];
  nd *= Length[DD], {k, 1, np}];
 (*divsets is the list of the sets of divisors of cycle lengths in \
sigma*)
 Descartes = Tuples[divsets]; (* nd is the length of Descartes *)
 Do[ (*loop through Descartes product *)
  product = Descartes[[id]];
  npr = Length[product];
  lcm = 1; prx = 1; pry = 1;
  (* Theorem 2 *)
  Do[
   lcm = LCM[lcm, product[[ipr]]];
   prx *= product[[ipr]];
   pry *= e[[product[[ipr]]]], {ipr, npr}];
  Spec[[lcm]] += prx*pry/lcm, {id, nd}];
 (* Theorem 1 *)
 numerator = Product[i^Spec[[i]] Spec[[i]]!, {i, Length[Spec]}];
 denominatorr = Product[i^P[[p, i]] P[[p, i]]!, {i, n}];
 sum = numerator/denominatorr^2;
 Vn += sum, {p, npp}]
Print[{"V_n = ", Vn}]
\end{verbatim}

\end{document}

@author: @affiliation: @title: @language: English @pages:
@classification1: @classification2: @keywords: @abstract: @filename:
@EOI